\newcommand{\wrt}{with respect to}
\newcommand{\zb}{\mathbf{z} }
\newcommand{\Z}{\mathbb{Z} }
\newcommand{\N}{\mathbb{N} }
\newcommand{\rt}{\rightarrow}
\newcommand{\image}{\operatorname{image}}
\newcommand{\grade}{\operatorname{grade}}
\newcommand{\charp}{\operatorname{char}}
\newcommand{\ann}{\operatorname{ann}}
\newcommand{\Sym}{\operatorname{Sym}}
\newcommand{\Hom}{\operatorname{Hom}}
\newcommand{\height}{\operatorname{height}}
\theoremstyle{plain}
\newtheorem{theorem}{Theorem}[section]
\newtheorem{lemma}[theorem]{Lemma}
\theoremstyle{definition}
\newtheorem{remark}[theorem]{Remark}
\newtheorem{example}[theorem]{Example}
\theoremstyle{remark}
\begin{document}

\title[Fixed points]{Fixed points and grade of Hilbert polynomial of invariant rings}
\author{Tony~J.~Puthenpurakal}
\date{\today}
\address{Department of Mathematics, IIT Bombay, Powai, Mumbai 400 076, India}

\email{tputhen@gmail.com}
\subjclass[2020]{13A50, 13C15}
\keywords{invariant rings,  group cohomology, quasi-polynomials}

 \begin{abstract}
Let $k$ be a field and let $V$ be a $k$-vector space of dimension $d$. Let $G \subseteq GL(V)$ be a finite group. Let $r = \dim_k (V^*)^G$. Assume $r \geq 1$. Let $R = k[V]^G$ be the ring of invariants of $G$. Let $H_R(n) = a_{d-1}(n)n^{d-1} + \cdots a_1(n)n + a_0(n)$ be the Hilbert polynomial of $R$ where $a_i(-)$ are periodic functions. We show
 $a_{d-1}(-), \ldots, a_{d-r}(-)$ are constants. In the terminology of Erhart, $\grade H_R \leq d - r-1$. We also give an example which shows that our result is sharp.
\end{abstract}
 \maketitle
\section{introduction}
Quasi-polynomials are ubiquitous in math. Let $f \colon \N \rt \Z$ be a quasi-polynomial of degree $r$. Say $f(n) = a_r(n)n^r + \text{lower order terms}$. A natural question is when
$a_i(-)$ are constant for $i > \delta$. Following Erhart, the smallest such $\delta$ is called the \emph{grade} of $f$.
To put this question in the right historical framework let us consider the following:
Let $P$ be a $d$-dimensional rational convex polytope in $\mathbb{R}^n$ and let $E(P,k) = \sharp \ kP \cap \mathbb{Z}^n $ be the Ehrhart
function (here $\sharp V$ denotes cardinality of a set $V$). It is well-known that   $E(P,k)$  is of quasi-polynomial type cf. \cite[6.3.11]{BH}, say $E(P,n) = b_s(n)n^s + \text{lower terms}$.
 We state a
simplified version of a conjecture due to Ehrhart: \textit{If the affine span of every $d-1$ dimensional face of $P$ contains a point with integer co-ordinates then
$b_s(-)$ is a constant.}
A more general version of this was proved independently by  McMullen (see \cite{M}) and Stanley
(\cite{S}, Theorem 2.8). For a recent proof see \cite[5]{BI}.

The Hilbert function of a ring of invariants $R = k[V]^G$ provide a rich class of quasi-polynomials. In this paper we study Erhart's question in this case.

\s \label{setup} Let $k$ be a field and let $V$ be a $k$-vector space of dimension $d$. Let $G \subseteq GL(V)$ be a finite group. Let $V^* = \Hom_k(V, k)$ be the dual of $V$. Let $S = k[V] = \Sym(V^*)$, the symmetric algebra of $V^*$. Furthermore let $R = S^G$. Let $H_R$ be the Hilbert quasi-polynomial of $R$, i.e., $H_R(n) = \dim_k R_n$ for $n \gg 0$. It is known that $H_R(n) = a_{d-1}(n)n^{d-1} + \cdots a_1(n)n + a_0(n)$ is a quasi-polynomial and $a_{d-1}(-) \neq 0$. Let $r = \dim_k (V^*)^G$. Assume $r \geq 1$ (otherwise our result is vacuously true).

In this short paper we prove:
\begin{theorem}
\label{main}(with hypotheses as in \ref{setup}). Then  $a_{d-1}(-), \ldots, a_{d-r}(-)$ are constants.
\end{theorem}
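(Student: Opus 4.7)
The plan is to exhibit a homogeneous system of parameters for $R$ whose first $r$ members are the invariant linear forms in $(V^*)^G$, and then read off the leading coefficients of $H_R$ from the partial fraction expansion of the Hilbert series $P_R(t) = \sum_{n \geq 0} \dim_k R_n \cdot t^n$.

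First, I would pick a basis $\ell_1, \ldots, \ell_r$ of $(V^*)^G \subseteq R_1$, extend it to a basis $\ell_1, \ldots, \ell_r, y_1, \ldots, y_{d-r}$ of $V^*$, and observe that $S = k[\ell_1, \ldots, \ell_r, y_1, \ldots, y_{d-r}]$, so $\dim S/(\ell_1, \ldots, \ell_r)S = d - r$. Since $G$ is finite, $R \subseteq S$ is an integral extension; going-up together with incomparability then yield $\dim R/(\ell_1, \ldots, \ell_r)R = d - r$, i.e., the linear forms $\ell_1, \ldots, \ell_r$ are part of a homogeneous s.o.p.\ for $R$. Standard graded prime avoidance extends this to a full homogeneous system of parameters $\ell_1, \ldots, \ell_r, f_{r+1}, \ldots, f_d$ with $\deg f_i = d_i \geq 1$, so the Hilbert--Serre theorem gives
$$P_R(t) = \frac{F(t)}{(1-t)^r \prod_{i=r+1}^d (1 - t^{d_i})} = \frac{F(t)}{(1-t)^d\, G(t)}$$
for some $F(t) \in \Z[t]$, where $G(t) = \prod_{i=r+1}^d (1 + t + \cdots + t^{d_i - 1})$.

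The second step is to analyze the poles of $P_R(t)$. The pole at $t=1$ has order (at most) $d$. At any other root of unity $\zeta$, each factor $1 + t + \cdots + t^{d_i - 1}$ contributes multiplicity at most one, so the pole of $P_R(t)$ at $\zeta$ has order at most $d-r$. A partial-fraction decomposition, followed by extraction of the coefficient of $t^n$, then produces
$$H_R(n) = Q(n) + \sum_{\zeta \neq 1} \zeta^{-n}\, P_\zeta(n),$$
with $Q \in \mathbb{Q}[n]$ of degree $\leq d-1$ and each $P_\zeta \in \mathbb{Q}[n]$ of degree $\leq d-r-1$. Because the oscillatory terms $\zeta^{-n} P_\zeta(n)$ can contribute only to the coefficients of $n^j$ for $j \leq d-r-1$, the top $r$ coefficients $a_{d-1}(-), \ldots, a_{d-r}(-)$ must coincide with the corresponding (constant) coefficients of $Q$, which is the desired conclusion.

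I expect the first step to be the main obstacle: namely, proving that the invariant linear forms $\ell_1, \ldots, \ell_r$ are part of an s.o.p.\ for $R$. The finiteness of $G$ is precisely what furnishes the integral extension $R \subseteq S$ needed for this dimension computation; note that, crucially, this argument is insensitive to whether $\charp k$ divides $|G|$, so no Cohen--Macaulay or Reynolds-operator hypotheses are required. Once a s.o.p.\ of the required shape is in hand, the Hilbert-series analysis is a standard partial-fraction exercise.
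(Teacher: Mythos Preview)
Your proposal is correct and follows the same two-step architecture as the paper: first show that a basis of $(V^*)^G$ extends to a homogeneous system of parameters for $R$, then read off the constancy of $a_{d-1}(-),\ldots,a_{d-r}(-)$ from the resulting shape of the Hilbert series. The execution of each step, however, is genuinely different. For the first step the paper (Lemma~\ref{lem-dim}) argues by induction on the number of linear invariants, invoking the long exact sequence in group cohomology $H^*(G,S)$ together with Koszul homology to control $\dim R/(z_1,\ldots,z_t)$; your route via the integrality of $R\subseteq S$ and going-up/incomparability (giving $\dim R/(\ell_1,\ldots,\ell_r)R=\dim S/(\ell_1,\ldots,\ell_r)S=d-r$ directly) is shorter and avoids group cohomology entirely, while remaining valid in the modular case since no Reynolds operator or direct-summand splitting is used. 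For the second step the paper (Theorem~\ref{qp-thm}) proceeds by induction on $r$ using the difference operator $f(n)-f(n-1)$, whereas your partial-fraction argument bounding the pole order at each $\zeta\neq 1$ by $d-r$ is a one-shot analytic computation; both are standard, but yours makes the reason for the bound $d-r$ more transparent. In short, your argument recovers the paper's theorem with a more elementary proof of the key dimension lemma.
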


\begin{remark}
It should be noted that we compute the dimension of fixed points of $V^*$ and \emph{not} of $V$. In fact the result as stated  is false if we work with fixed points of $V$; see \cite[8.2.6]{SL}.
\end{remark}

\begin{remark}
If $\charp k = p > 0$ and $G$ is a $p$-group then note that $\dim_k (V^*)^G \geq 1$, see \cite[8.2.1]{SL}.
\end{remark}
We now describe in brief the contents of this paper. In section two we prove a Lemma that we need. In section three we prove a result on quasi-polynomials that we need. In section four we prove Theorem \ref{main}. In the final section we give an example which shows that our result is sharp.
\section{A Lemma}
The following Lemma is a crucial ingredient in the proof of Theorem \ref{main}.
\begin{lemma}
\label{lem-dim}(with hypotheses as in \ref{setup}). Let $\{z_1, \ldots, z_t \}$ be a linearly independent subset of $(V^*)^G$. Then
\begin{enumerate}[\rm (1)]
  \item $\dim R/(z_1,\ldots, z_t) \leq d - t$.
  \item If $P$ is a minimal prime over $(z_1, \ldots, z_t)$ then $\height P = t$.
  \item There exists a homogeneous system of parameters $\{ z_1, \ldots, z_t, y_1, \ldots, y_{d-t}\}$ of $R$.
  \item The Hilbert series of $R$ is of the form
          $$ \frac{Q(u)}{(1-u)^t \prod_{j = 1}^{d-t}(1-u^{\deg y_j})} \quad \text{where} \ Q(u) \in \Z[u] \ \text{and} \ Q(1) \neq 0. $$
\end{enumerate}
\end{lemma}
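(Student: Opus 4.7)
My plan is to exploit the fact that $R \subseteq S = \Sym(V^*)$ is a finite integral extension of graded $k$-algebra domains with $\dim R = \dim S = d$ (Hilbert--Noether). Crucially, $z_1,\ldots,z_t$, being linearly independent in $V^*$, can be completed to a $k$-basis $\{z_1,\ldots,z_t,x_{t+1},\ldots,x_d\}$ of $V^*$; thus in the polynomial ring $S = k[z_1,\ldots,z_t,x_{t+1},\ldots,x_d]$ they form part of a regular system of parameters, and $S/(z_1,\ldots,z_t)S$ is a polynomial ring of dimension $d-t$.

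Set $J := (z_1,\ldots,z_t)R$. For (1), the standard dimension identity for integral extensions---proved via lying-over---gives $\dim R/J = \dim S/JS = d-t$. For (2), Krull's principal ideal theorem gives $\height P \leq t$ for any minimal prime $P$ over $J$. Conversely, $R$ is a finitely generated graded domain over $k$, hence catenary and equidimensional, so $\height P + \dim R/P = d$; combining $\dim R/J = d-t$ from (1) with the surjection $R/J \twoheadrightarrow R/P$ yields $\dim R/P \leq d-t$, whence $\height P \geq t$.

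For (3), I would apply graded Noether normalization to $R/J$ (a finitely generated graded $k$-algebra of dimension $d-t$ by (1)) to obtain homogeneous elements $\bar y_1,\ldots,\bar y_{d-t} \in R/J$ over which $R/J$ is module-finite. Lifting to homogeneous $y_1,\ldots,y_{d-t} \in R$ makes $R/(z_1,\ldots,z_t,y_1,\ldots,y_{d-t})$ zero-dimensional, so the indicated set is a homogeneous system of parameters for $R$.

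For (4), the $d$ h.s.o.p. elements must be algebraically independent over $k$ (otherwise $R$, finite over them, would have Krull dimension less than $d$), so $A := k[z_1,\ldots,z_t,y_1,\ldots,y_{d-t}]$ is a graded polynomial subring of $R$ and $R$ is a finitely generated graded $A$-module. This forces the Hilbert series to take the stated form with $Q(u) \in \Z[u]$; the condition $Q(1) \neq 0$ then follows because $H_R$ has a pole of order exactly $d = \dim R$ at $u=1$, matching the order of the denominator. The main conceptual input---and the only place where the hypothesis $z_i \in (V^*)^G$ is used in an essential way---is the initial observation that linearly independent degree-one invariants form part of a regular system of parameters for $S$; after that, the argument is fairly standard graded commutative algebra and there is no serious obstacle.
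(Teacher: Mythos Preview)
Your proof is correct, and for part~(1) it takes a genuinely different and more elementary route than the paper. The paper argues by induction on $t$: it applies the $G$-invariants functor to $0\to S(-1)\xrightarrow{z_1} S\to T\to 0$, obtaining a long exact sequence involving the group cohomology $H^1(G,S)$; it then sets $A=R/z_1R$, $K=\ker\big(z_1\colon H^1(G,S)(-1)\to H^1(G,S)\big)$, takes Koszul homology with respect to $z_2,\ldots,z_t$, and bounds dimensions via the inductive hypothesis applied to $T^G$. You bypass all of this by observing that $z_1,\ldots,z_t$ extend to a $k$-basis of $V^*$, so $S/(z_1,\ldots,z_t)S$ is a polynomial ring of dimension $d-t$, and then invoking the identity $\dim R/J=\dim S/JS$ for the finite integral extension $R\subseteq S$ (lying-over plus going-up). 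This is shorter, avoids any cohomological machinery, and in fact yields the equality $\dim R/(z_1,\ldots,z_t)=d-t$ rather than merely the inequality. For parts (2)--(4) your argument and the paper's are essentially the same standard graded commutative algebra.

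One small expository correction: the hypothesis $z_i\in(V^*)^G$ is not used, as you suggest, to make the $z_i$ part of a regular system of parameters of $S$---any linearly independent elements of $V^*$ have that property. Rather, $G$-invariance is precisely what guarantees $z_i\in R$, so that the ideal $J=(z_1,\ldots,z_t)R$ is defined and the comparison $\dim R/J=\dim S/JS$ is available.
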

\begin{proof}
(1) We prove the result by induction on $t$. When $t = 1$ we have nothing to prove. Assume the result for $t-1 \geq 1$ and we prove the result for $t$. The action of $G$ on $V^*/kz_1$ fixes $z_2, \ldots, z_t$. We have an exact sequence
$0 \rt S(-1) \xrightarrow{z_1} S \rt T \rt 0$. Taking invariants we have an exact sequence
$$0 \rt R(-1) \xrightarrow{z_1} R \xrightarrow{u} T^G \rt H^1(G, S)(-1) \xrightarrow{z_1} H^1(G, S), $$
where $H^1(G, S)$ is the first group cohomology of $S$ \wrt \ $G$. Set $A = \image u$  and $ K = \ker(H^1(G, S)(-1) \xrightarrow{z_1} H^1(G, S))$.
So we have an exact sequence $0 \rt A \rt T^G \rt K \rt 0$. We take Koszul homology \wrt \ $\zb^\prime = z_2, \ldots, z_t$. So we have an exact sequence
$$ H_1(\zb^\prime, K) \rt A/\zb^\prime A \rt T^G/\zb^\prime T^G \rt K/\zb^\prime K \rt 0.$$
By induction hypothesis we have $\dim T^G/\zb^\prime T^G \leq d-1 - (t-1) = d -t$. It follows that $\dim K/\zb^\prime K \leq d - t$. We note that $H_1(\zb^\prime, K)$ is annhilated by $\ann K$ and $\zb^\prime$.
So $\dim H_1(\zb^\prime, K) \leq d - t$. It follows that $\dim A/\zb^\prime A \leq d -t$. As $A = R/z_1 R$ we get that $\dim R/\zb R \leq d- t$; here $\zb = z_1, \ldots, z_t$.
The  result follows.

(2) Let $P$ be a minimal prime of $(z_1, \ldots, z_t)$. Then $P$ is a homogeneous prime. Then $\dim R/P \leq d - t$ by (1). We note that $R$ is equi-dimensional (as it is a domain) and catenary. So $\dim R/P = d - \height P$, see
\cite[Lemma 2, Section 31]{Mat} for the local case, the same proof works for the *-local case. So $\height P \geq t$. But $P$ is minimal over $(z_1, \ldots, z_t)$. So $\height P \leq t$. Thus $\height P = i$.

(3) By (2) it follows that $\height (z_1, \ldots, z_t) = t$. We can extend $\zb$ to get a homogeneous system of parameters of $R$.

(4) This follows from (3).
\end{proof}

\section{A result regarding quasi-polynomials}
In this section we prove a result regarding quasi-polynomials that we need.

\s Recall a function $f \colon \N \rt \Z$ is said to be of quasi-polynomial type if there exists a quasi-polynomial $P(X)$ such that $f(n) = P(n)$ for all $n \gg 0$. If
$P(n) = a_r(n)n^r + \cdots + a_0(n)$ with $a_i(-)$ periodic functions and $a_r(-) \neq 0$ then we say $\deg(f) = r$.

\s \label{qp} Let  $f \colon \N \rt \Z$ be of quasi-polynomial type of degree $r - 1$. Then
$$ \sum_{n \geq 0}f(n)z^n = \frac{Q(z)}{\prod_{i = 1}^{r} (1-z^{d_i})}   \quad \text{where} \ Q(z) \in \Z[z] \ \text{and} \ Q(1) \neq 0  \ \text{and} \ d_i \geq 1. $$

The following result is crucial in our proof of Theorem \ref{main}
\begin{theorem}
\label{qp-thm} Let  $f \colon \N \rt \N$ be of quasi-polynomial type of degree $d - 1$. Assume $$ \sum_{n \geq 0}f(n)z^n = \frac{Q(z)}{(1-z)^r\prod_{i = 1}^{d-r} (1-z^{d_i})} \quad \text{where} \ Q(z) \in \Z[z] \ \text{and} \ Q(1) \neq 0  \ \text{and} \ d_i \geq 1. $$
If  for $n \gg 0$ we have $f(n) = a_{d-1}(n)n^{d-1} + \cdots a_1(n)n + a_0(n)$  where $a_i(-)$ are periodic functions, then $a_{d-1}(-), \ldots, a_{d-r}(-)$ are constants.
\end{theorem}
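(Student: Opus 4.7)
The plan is to perform a partial fraction decomposition of the rational generating function over $\mathbb{C}$ and read off the quasi-polynomial structure of $f$ directly from the pole data. All poles lie at roots of unity, so I would first bound their orders carefully. Since $1 - z^{d_i} = (1-z)(1 + z + \cdots + z^{d_i - 1})$ and the second factor does not vanish at $z = 1$, the denominator has a zero of order exactly $r + (d-r) = d$ at $z = 1$; combined with $Q(1) \neq 0$, this shows $z = 1$ is a pole of the generating function of order exactly $d$. At any other root of unity $\zeta \neq 1$, the factor $(1-z)^r$ contributes nothing and each $1 - z^{d_i}$ contributes at most a simple pole, so the pole order $m_\zeta$ at $\zeta$ satisfies $m_\zeta \leq d - r$.

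Next, the partial fraction decomposition gives
$$ \frac{Q(z)}{(1-z)^r \prod_{i=1}^{d-r}(1-z^{d_i})} = \sum_{k=1}^{d} \frac{c_k}{(1-z)^k} + \sum_{\zeta \neq 1}\sum_{k=1}^{m_\zeta} \frac{c_{\zeta,k}}{(1 - \zeta^{-1}z)^k} + \text{(polynomial)}, $$
where the polynomial part affects only finitely many coefficients and can be ignored for $n \gg 0$. Using the standard expansion $\sum_{n \geq 0} \binom{n+k-1}{k-1} \zeta^{-n} z^n = (1 - \zeta^{-1} z)^{-k}$, this yields
$$ f(n) = \sum_{k=1}^{d} c_k \binom{n+k-1}{k-1} + \sum_{\zeta \neq 1} \zeta^{-n} P_\zeta(n) \qquad (n \gg 0), $$
with each $P_\zeta$ a polynomial in $n$ of degree at most $m_\zeta - 1 \leq d - r - 1$.

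Finally, the first sum is a genuine polynomial in $n$ of degree at most $d-1$, so it contributes only constants to each coefficient $a_j(-)$. Regrouping the second sum by powers of $n$, its contribution to the coefficient of $n^j$ is $\sum_{\zeta \neq 1}(\text{coefficient of } n^j \text{ in } P_\zeta)\cdot \zeta^{-n}$, a periodic function of $n$; crucially, this contribution vanishes for every $j \geq d - r$ because each $P_\zeta$ has degree at most $d - r - 1$. Therefore $a_{d-1}(-), \ldots, a_{d-r}(-)$ coincide with the corresponding polynomial-part coefficients and are constant. The only delicate point is the pole-order bound $m_\zeta \leq d - r$ at $\zeta \neq 1$; once it is established, the rest is a routine unpacking of the partial fraction expansion.
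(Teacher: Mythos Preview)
Your argument is correct, but it proceeds along a different route from the paper's. The paper argues by induction on $r$: multiplying the generating function by $(1-z)$ replaces $f(n)$ by its first difference $g(n)=f(n)-f(n-1)$, which has degree $d-2$ and whose generating function has the same shape with $r$ replaced by $r-1$; by the induction hypothesis the top $r-1$ coefficients of $g$ are constant, and comparing the coefficient of $n^{d-r}$ on both sides of $f(n)-f(n-1)=g(n)$ shows that $a_{d-r}(n)-a_{d-r}(n-1)$ is constant, whence the periodic function $a_{d-r}(-)$ is constant. Your partial-fraction approach is the more classical one (in spirit it is the method of Bruns and Ichim, cited in the paper as \cite{BI}): once you bound the pole order at every $\zeta\neq 1$ by $d-r$, the conclusion drops out immediately from the expansion. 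What your method buys is directness and a clear structural explanation of \emph{why} exactly $r$ top coefficients are constant; what the paper's method buys is that it stays over $\Z$, uses only formal power series and the difference operator, and never needs to pass to $\mathbb{C}$ or invoke uniqueness of the quasi-polynomial decomposition.
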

\begin{proof}
We prove the result by induction on $r$. We first consider the case when $r = 1$.
We have
$$(1-z)\sum_{n \geq 0}f(n)z^n = \frac{Q(z)}{\prod_{i = 1}^{d-1} (1-z^{d_i})} = \sum_{n \geq 0}g(n).$$
Then $g$ is of polynomial type of degree $d - 2$. Say $g(n) = b_{d-2}n^{d-2} + \cdots$.
We note that $f(n) - f(n-1) = g(n)$. It follows that $a_{d-1}(-)$ is a constant as otherwise $\deg g $ will be $d -1$. The result follows when $r = 1$.

Now assume that result holds for $r -1$ and we  prove it for $r$ (here $r \geq 2$).
By induction hypothesis $a_{d-1}(-), \ldots, a_{d-r+ 1}(-)$ are constants.
We have
$$(1-z)\sum_{n \geq 0}f(n)z^n = \frac{Q(z)}{(1-z)^{r-1}\prod_{i = r}^{d-1} (1-z^{d_i})} = \sum_{n \geq 0}g(n).$$
Then $g$ is of polynomial type of degree $d - 2$. Say $g(n) = b_{d-2}n^{d-2} + \cdots$. Then again by induction hypotheses $b_{d-2}(-), \ldots, b_{d-r}(-)$ are constants. We note that $f(n) - f(n-1) = g(n)$. We get that $a_{d-r +1} + a_{d-r}(n) - a_{d-r}(n -1) = b_{d-r}$ which is a constant. As $a_{d-r}(-)$ is periodic it follows that $a_{d-r}(-)$ is a constant.
The result holds by induction.
\end{proof}
\section{Proof of Theorem \ref{main}}
We have
\begin{proof}[Proof of Theorem \ref{main}]
This follows from Lemma \ref{lem-dim} and Theorem \ref{qp-thm}.
\end{proof}
\section{An example}
We give an example which shows that our result is sharp.
\begin{example}
  Let $G = <\sigma> \subseteq GL_2(k)$ where
  $$   \sigma = \begin{pmatrix}
                  0 & 1 \\
                  1 & 0
                \end{pmatrix}                  $$
\end{example}
Then $k[V]^G = k[x+y, xy]$. We note that the dimension of fixed points of $V^*$ is one. The Hilbert series of this ring of invariants is
$H_R(n) = n/2 + a_0(n)$ for $n \gg 0$. Here $a_0(n) = 1$ if $n$ is even and $1/2$ when $n$ is odd. So $a_0(-)$ is \emph{not} a constant.

\end{document}